\def\pSkip{\vskip 1mm \noindent}
\def\add{\vee}
\def\mlt{+}
\def\NF{\mathfrak{C}}
\def\Minf{-}
\def\Chn{\mathsf{Ch}}
\def\Plc{\mathsf{Plc}}
\def\Bcy{\mathsf{Bcy}}
\def\Un{\UT_n(\Trop)}
\def\Mat{\tM}
\def\Mn{\Mat_n}
\def\Un{\tU_n}
\def\bfk{ \textbf{k}}
\def\k{\bfk}
\def\tlk{\widetilde{k}}
\def\tlbk{\widetilde{\k}}
\def\tlrho{\widetilde{\rho}}
\newcommand{\hlt}[1]{\textbf{#1}}
\def\N{\mathbb N}
\def\R{\mathbb R}
\def\N{\mathbb N}
\def\Z{\mathbb Z}
\def\tS{\mathcal S}
\def\varX{\mathcal A}
\def\tM{\mathcal M}
\def\tU{\mathcal U}
\newcommand\sid[2]{\langle #1 , #2 \rangle}
\newcommand\cset[1]{\langle #1 \rangle}
\def\set2{{\cset{2}}}
\def\l2{x^2y^2x}
\def\ll2{yx^2y^2x}
\newcommand{\ds}[1]{\ {#1} \ }
\newtheorem*{nothma}{\textbf{Theorem I}}
\newtheorem*{nocora}{\textbf{Corollary I}}
\newtheorem*{nocorb}{\textbf{Corollary II}}
\newtheorem{theorem}{Theorem}[section]
\newtheorem{lemma}[theorem]{Lemma}
\newtheorem{remark}[theorem]{Remark}
\newtheorem*{remark*}{Remark}
\newcommand {\junk}[1]{}
\def\({\left(}
\def\){\right)}
\newcommand{\To}{\longrightarrow }
\def\al{\alpha}
\def\Real{\mathbb R}
\def\Trop{\mathbb T}
\title[Tropical linear  representations of the Chinese monoid]
 {Tropical linear representations of the Chinese monoid}
\author{Zur Izhakian}
\address{  Institute  of Mathematics,
 University of Aberdeen, AB24 3UE,
Aberdeen,  UK.
    }
    \email{zzur@abdn.ac.uk}
\author{Glenn Merlet}
 \address{Aix Marseille Univ, CNRS, Centrale Marseille, I2M, Marseille, France}
 \email{glenn.merlet@univ-amu.fr}
\subjclass[2010]{Primary:  20M05, 20M07, 20M30, 47D03; Secondary: 16R10,  68Q70,
14T05. }
\keywords{Tropical (max-plus) matrices, supertropical matrices, idempotent
semirings, semigroup identities,  semigroup
representations, weighted diagraphs.}
\thanks{\textbf{Acknowledgment}:
  The results within this paper were obtained under the auspices of the Resarch in Pairs program of the Mathematisches Forschungsinstitut Oberwolfach, Germany. The authors thank MFO for the excellent working environment. \\
  }
\begin{document}

\begin{abstract} We introduce a faithful tropical linear representation of the Chinese monoid, and thus prove that this monoid admits all the semigroup identities satisfied by tropical triangular matrices.
\end{abstract}

\maketitle



\section*{Introduction}
\numberwithin{equation}{section}

The \textbf{Chinese monoid} of rank $n$ is the  finitely generated presented monoid
$$ \Chn_n := \langle a_1, \dots, a_n \rangle,$$
 subject to  the relations
\begin{equation}\label{eq:chn.relations}
a_j a_k a_ i \ds = a_k a_j a_i \ds = a_k a_i a_j \qquad \text{for all } i \leq j \leq k. \tag{CH}
\end{equation}
Each element $x \in \Chn_n$  has a unique presentation, called the \textbf{canonical form}, written as:
\begin{equation}\label{eq:chNF1}
\NF(x) = b_1 b_2 \cdots  b_n \tag{CF.a}
\end{equation}
with
\begin{equation}\label{eq:chNF2}
\begin{array}{ll}
b_1 & = a_1^{k_{11}},\\[1mm]
b_2 & = (a_2a_1)^{k_{21}}a_2^{k_{22}}, \\[1mm]
b_3 & = (a_3a_1)^{k_{31}} (a_3a_2)^{k_{32}}a_3^{k_{33}}, \\[1mm]
\ \vdots & \qquad \vdots \\[1mm]
b_n & = (a_na_1)^{k_{n1}} (a_na_2)^{k_{n2}} \cdots  (a_n a_{n-1})^{k_{n(n-1)}} a_n^{k_{nn}},
\end{array}
\tag{CF.b}
\end{equation}
where all powers $k_{ji}$ are non-negative.
This monoid is strongly related to the plactic monoid $\Plc_n$ \cite{Cassaigne}, and thereby also to Young tableaux \cite{Las1,Las3}, which play a major role in representation theory and algebraic combinatorics \cite{Fulton,Sagan,SaganBook}.
A characterisation of the equivalence classes of $\Chn_n$ and a cross-section theorem were provided by an algorithm similar to Schensted's algorithm for $\Plc_n$ \cite{Schensted}.

The tropical (max-plus) semiring  is the set $\Trop\ := \Real\cup\{-\infty\}$ equipped with the operations
of maximum and summation
$$a \add  b :=\max\{a,b\},\qquad a \mlt b := \operatorname{sum}\{a,b \},
$$
addition and multiplication, respectively. Square matrices over $\Trop$ form the monoid  $\Mn(\Trop)$, whose multiplication is induced from the operations of $\Trop$ in the familiar way. This matrix monoid has a polynomial growth. While matrices over an infinite field do not admit  a semigroup identity \cite{GM}, by
relying on identities for tropical the monoid $\Un(\Trop)$ of triangular tropical matrices,  \cite[~Theorem 3.7]{IzhakianMerletIdentity} proves that $\Mn(\Trop)$ satisfies  nontrivial semigroup identities. Partial results  appear in   \cite{trID,trID.Er,mxID,IzMr,Shitov,Okninski,l17}.

 $\Chn_n$ has polynomial growth of degree  $n(n + 1)/2$, and  was used by Duchamp \& Krob (1994) in the their classification of monoids having polynomial growth.
 $\Plc_n$ can be faithfully represented by matrices over the tropical semiring \cite{plc,JKplc}.
 In this paper, we prove that the same holds for $\Chn_n$, which has the same growth as~$\Plc_n$.
\begin{nothma}
The Chinese monoid $\Chn_n$ has a faithfull linear representation by tropical triangular matrices in $\tU_{n(n+1)}(\Trop)$,
more precisely by block-diagonal triangular matrices with blocks of size 2.\end{nothma}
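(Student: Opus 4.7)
The plan is to construct an explicit representation $\rho : \Chn_n \to \tU_{n(n+1)}(\Trop)$ as a direct sum of $\binom{n+1}{2} = n(n+1)/2$ two-dimensional representations $\rho_{j,\ell}$, indexed by the pairs $(j,\ell)$ with $1 \le \ell \le j \le n$. This indexing is natural, because it matches the number of exponents $k_{j\ell}$ in the canonical form \eqref{eq:chNF1}--\eqref{eq:chNF2}, so a priori each block can be dedicated to tracking one such exponent.

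For each block I would specify matrices $M_m^{(j,\ell)} \in \tU_2(\Trop)$ for every generator $a_m$. Any upper triangular $2\times 2$ tropical matrix is determined by a triple $(\alpha_m,\beta_m,\gamma_m)$ of entries, and for a product $M_{w_1}\cdots M_{w_k}$ the diagonal entries are the sums $\sum_s \alpha_{w_s}$ and $\sum_s \gamma_{w_s}$, while the $(1,2)$ entry is the max-plus expression $\max_i \bigl(\sum_{s<i}\alpha_{w_s} + \beta_{w_i} + \sum_{s>i}\gamma_{w_s}\bigr)$. Because the diagonal entries of a product depend only on the multiset of letters appearing, the Chinese relations $a_p a_q a_r = a_q a_p a_r = a_q a_r a_p$ (for $r\le p\le q$) are automatic on the diagonal and reduce in each block to a single max-plus identity on the $(1,2)$ entry. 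The matrices $M_m^{(j,\ell)}$ are then chosen to make this max-plus identity valid for all admissible triples $(r,p,q)$; this is an algebraic constraint on the triples $(\alpha_m^{(j,\ell)},\beta_m^{(j,\ell)},\gamma_m^{(j,\ell)})$ that admits a non-degenerate family of solutions.

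Once $\rho$ is shown to be a well-defined homomorphism, faithfulness is proved by computing $\rho(\NF(x))$ explicitly on the canonical form $b_1 b_2 \cdots b_n$. In block $(j,\ell)$, the $(1,1)$ and $(2,2)$ entries of $\rho_{j,\ell}(\NF(x))$ come out as explicit linear combinations of the $k_{j'\ell'}$, while the $(1,2)$ entry, being a max-plus expression, isolates $k_{j\ell}$ up to contributions from adjacent exponents. As $(j,\ell)$ varies over all pairs with $\ell\le j$, the resulting system has enough independent functionals to recover the full tuple $(k_{j\ell})_{\ell\le j}$ by a triangular inversion (proceeding, for instance, from small $j$ upwards and inside each $b_j$ from small $\ell$ upwards). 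Since every $x \in \Chn_n$ has a unique canonical form and $\rho$ is a homomorphism, $\rho(x)=\rho(\NF(x))$, so the inversion shows that $\rho(x)=\rho(y)$ forces $\NF(x)=\NF(y)$ and hence $x=y$.

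The main obstacle is the simultaneous achievement of (a) the homomorphism property in every block and (b) enough separation across blocks to recover the canonical form. The most naive ansatz --- letting $M_m^{(j,\ell)} = \Id$ whenever $m \notin \{\ell,j\}$ --- either breaks the Chinese relations (because the rewriting $a_q a_p a_r = a_q a_r a_p$ can move intermediate letters $a_m$ with $\ell < m < j$ past $a_\ell$ and $a_j$, producing mismatches in the $(1,2)$ entry) or, under symmetric restrictions that save the relations, collapses too many short words to the same matrix. The construction must therefore coordinate the contribution of \emph{every} generator inside each block, so that all intermediate letters interact compatibly with the relations while the collection of $(1,1)$ and $(1,2)$ entries across blocks remains rich enough to pin down every $k_{j\ell}$.
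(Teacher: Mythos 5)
Your proposal is an outline of a strategy, but it leaves the central difficulty of the theorem unresolved. Everything hinges on actually exhibiting, for each block $(j,\ell)$, generator matrices $M_m^{(j,\ell)}\in\tU_2(\Trop)$ that simultaneously (a) satisfy the Chinese relations \eqref{eq:chn.relations} and (b) together yield an injective map on the exponent tuples $(k_{j\ell})$. You assert that the max-plus constraint coming from the relations ``admits a non-degenerate family of solutions'' and that the $(1,2)$ entries ``isolate $k_{j\ell}$ up to contributions from adjacent exponents,'' but neither claim is substantiated, and your own closing paragraph concedes that the naive ans\"atze fail and that the construction ``must coordinate the contribution of every generator inside each block'' --- without saying how. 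As written, both the well-definedness of $\rho$ and the triangular inversion rest on matrices that are never produced, so there is a genuine gap: the existence claim you defer is essentially equivalent to the theorem itself.

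The paper sidesteps this simultaneous-constraint problem entirely. It first produces an explicit faithful representation of $\Chn_3$ by three $2\times2$ triangular blocks (found by computer search), with the key property that $\k\mapsto\rho(\NF_{\k})$ is the restriction of an injective affine map. It then inducts on $n$: from a faithful representation $\rho$ of $\Chn_n$ one obtains representations $\rho_\ell$ of $\Chn_{n+1}$ for free by sending both $a_\ell$ and $a_{\ell+1}$ to $\rho(a_\ell)$ --- the relations hold automatically because they are inherited from $\Chn_n$, so no new matrix equations ever need to be solved. A combinatorial lemma describing how the canonical form transforms under the identification $a_{\ell+1}\mapsto a_\ell$ (Lemma \ref{l:NFn+1}) shows that the three choices $\ell=1,2,n$ jointly determine the exponent tuple $\tlbk$, so $(\rho_1,\rho_2,\rho_n)$ is faithful. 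Finally, since the induced map on exponent tuples is affine on $\R^{n(n+1)/2}$, a projection onto $n(n+1)/2$ of the $2\times 2$ blocks can be chosen that remains injective, which yields the stated size $n(n+1)$; note that the surviving blocks are \emph{not} naturally indexed so that block $(j,\ell)$ tracks $k_{j\ell}$, as your plan assumes. To salvage your direct approach you would need to supply the explicit matrices for every block and verify both the relations and the invertibility of the resulting affine system --- precisely the work the paper's induction is designed to avoid for general $n$.
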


This theorem establishes  a useful machinery  to determine the canonical form \eqref{eq:chNF1} of a given word in~ $\Chn_n$ (Remark \ref{rem:cf}), and also  infers that
\begin{nocora}
$\Chn_n$ admits all the semigroup identities satisfied by the monoid $\tU_2(\Trop)$ of $2 \times 2$ tropical triangular matrices.\end{nocora}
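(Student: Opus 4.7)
The plan is to deduce the corollary directly from Theorem I via the standard principle that semigroup identities are inherited by subsemigroups of direct powers. Fix any semigroup identity $u(x_1,\dots,x_m) = v(x_1,\dots,x_m)$ satisfied by $\tU_2(\Trop)$; I want to show that this identity must also hold in $\Chn_n$.

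First, I would invoke Theorem I to obtain a faithful monoid homomorphism $\sHom : \Chn_n \to \tU_{n(n+1)}(\Trop)$ whose image consists of block-diagonal triangular matrices with $n(n+1)/2$ diagonal blocks of size $2$, each block lying in $\tU_2(\Trop)$. Letting $\sHom_\ell$ denote the projection onto the $\ell$-th block, each composition $\sHom_\ell : \Chn_n \to \tU_2(\Trop)$ is itself a monoid homomorphism, because multiplication of block-diagonal matrices with a fixed block pattern is performed block by block.

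Next, given any $c_1,\dots,c_m \in \Chn_n$, the evaluation of any word in the matrices $\sHom(c_1),\dots,\sHom(c_m)$ is also carried out block by block. Hence the $\ell$-th block of $\sHom(u(c_1,\dots,c_m))$ equals $u(\sHom_\ell(c_1),\dots,\sHom_\ell(c_m))$, and likewise for $v$. Since $\tU_2(\Trop)$ satisfies $u=v$, these two blocks coincide for every $\ell$, so $\sHom(u(c_1,\dots,c_m)) = \sHom(v(c_1,\dots,c_m))$. Faithfulness (i.e.\ injectivity) of $\sHom$ then yields $u(c_1,\dots,c_m) = v(c_1,\dots,c_m)$ in $\Chn_n$, proving that $\Chn_n$ satisfies the identity.

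There is no serious obstacle beyond Theorem I itself: all mathematical depth sits in the construction of the faithful representation, and the deduction above is just the elementary closure of identity-satisfaction under direct products and submonoids, applied to the embedding $\sHom(\Chn_n) \hookrightarrow \tU_2(\Trop)^{n(n+1)/2}$ induced by the block structure. The only point worth flagging in the write-up is the blockwise behaviour of matrix multiplication, which I would state explicitly so that the step $\sHom_\ell(u(\dots)) = u(\sHom_\ell(\cdot),\dots,\sHom_\ell(\cdot))$ is manifestly justified.
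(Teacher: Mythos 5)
Your argument is correct and is exactly the route the paper intends: the paper derives Corollary I from Theorem I via the observation (stated in Section 1) that a faithful representation transports identities, combined with the block-diagonal structure, which realises $\sHom(\Chn_n)$ inside a direct power of $\tU_2(\Trop)$. Your write-up merely makes explicit the standard closure of identity-satisfaction under direct products and submonoids, which the paper leaves implicit.
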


The monoid algebra of $\Chn_n$ was studied in \cite{Jaszunska}, showing that $\Chn_n$ embeds in a product of bicyclic monoids, and thus  admits the Adjan identity \cite{Adjan}
\begin{equation}\label{eq:2x2id}
 a b^2 a \; ab \; a b^2 a = a b^2 a \; ba\; a b^2 a. \tag{AD}
\end{equation} On the other hand, $2 \times 2$ tropical  triangular matrices satisfy the same identities as the bicyclic monoid satisfy~\cite{DJK}.
Therefore, Corollary I provides an alternative simpler proof of these identities, dismissing the embedding into bicyclic monoids.
It also gives the following.
\begin{nocorb}
For any $n$, the Chinese monoid $\Chn_n$ determines the same variety as the bicyclic monoid~$\Bcy$.
\end{nocorb}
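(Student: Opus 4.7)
The plan is to prove the equality of varieties $\mathrm{Var}(\Chn_n) = \mathrm{Var}(\Bcy)$ by establishing both inclusions separately.

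For the inclusion $\mathrm{Var}(\Chn_n) \subseteq \mathrm{Var}(\Bcy)$, Corollary~I asserts that $\Chn_n$ satisfies every semigroup identity of the monoid $\tU_2(\Trop)$ of $2 \times 2$ tropical triangular matrices. By the result of \cite{DJK}, these identities coincide with the identities satisfied by the bicyclic monoid $\Bcy$. Hence $\Chn_n$ satisfies every identity of $\Bcy$, which gives $\mathrm{Var}(\Chn_n) \subseteq \mathrm{Var}(\Bcy)$. (The alternative route via Jaszu\'nska's embedding $\Chn_n \hookrightarrow \Bcy^{k}$ yields the same conclusion, but uses external input that Corollary~I has just replaced.)

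For the reverse inclusion $\mathrm{Var}(\Bcy) \subseteq \mathrm{Var}(\Chn_n)$, I would invoke the Birkhoff HSP theorem and exhibit $\Bcy$ as a divisor (a homomorphic image of a subsemigroup) of some power of $\Chn_n$. The plan is to exploit the faithful embedding of Theorem~I: since $\Chn_n \hookrightarrow \tU_2(\Trop)^{n(n+1)/2}$ via the block-diagonal representation, projection onto any single $2 \times 2$ block furnishes a monoid homomorphism $\pi \colon \Chn_n \to \tU_2(\Trop)$. Combined with the standard embedding $\Bcy \hookrightarrow \tU_2(\Trop)$, the goal is to choose a block and a two-generator submonoid of $\Chn_n$ (working, say, inside the canonical copy of $\Chn_2$ for $n \geq 2$) whose $\pi$-image generates precisely this copy of $\Bcy$; an HSP-divisor presentation of $\Bcy$ then follows.

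The main obstacle lies in this second inclusion. A direct embedding $\Bcy \hookrightarrow \Chn_n$ is impossible, since the defining relation $pq = 1$ in $\Bcy$ would force the images of $p, q$ to be mutually inverse, while $\Chn_n$ has no non-trivial invertible elements. Consequently a genuine HSP-divisor argument is unavoidable, and the technical crux is to pinpoint the right $2 \times 2$ block from Theorem~I together with a pair of generators in $\Chn_2$ whose image under the corresponding projection realizes (or surjects onto) the bicyclic generators. Once such generators are identified, the kernel of the restricted map yields the congruence whose quotient is $\Bcy$, completing the proof.
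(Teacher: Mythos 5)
Your first inclusion coincides with the paper's: Corollary I together with the result of \cite{DJK} identifying the identities of $\tU_2(\Trop)$ with those of $\Bcy$ gives $\mathrm{Var}(\Chn_n)\subseteq\mathrm{Var}(\Bcy)$. The genuine gap is in the reverse inclusion. You rightly note that $\Bcy$ cannot embed in $\Chn_n$ and that a divisor (quotient of a submonoid) is needed, but you never produce one: the construction via a single $2\times2$ block of the representation from Theorem I is left as a ``goal'', and as described it does not work on the nose. The image of $\Chn_2=\langle a,b\rangle$ under a block projection is the submonoid generated by the two listed generator matrices, and its fibers are strictly finer than the $\Bcy$-classes. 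For instance, in Representation I one has $\rho_1(ba)=\left(\begin{smallmatrix}1&0\\ \Minf&1\end{smallmatrix}\right)$, a nonzero tropical scalar multiple of the identity, so $\rho_1(a\cdot ba)=\left(\begin{smallmatrix}2&1\\ \Minf&1\end{smallmatrix}\right)\neq\rho_1(a)$, whereas in any homomorphism onto $\Bcy$ the element $ba$ must go to the identity and $a\cdot ba$ must be identified with $a$. Thus the projected image is an extension of $\Bcy$ by a scalar (diagonal) part, not $\Bcy$; you would still have to quotient that out and verify the result, which is precisely the step you have not done.

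The paper closes this direction with one short, known fact instead. By Lemma \ref{l:aiaj}, $ba=a_2a_1$ is central in $\Chn_2$, and in the quotient $\Chn_2/(ba=1)$ both Chinese relations $aba=baa$ and $bab=bba$ collapse to trivialities, so this quotient is presented by $\langle a,b\mid ba=1\rangle\cong\Bcy$. Hence there is a surjective morphism $\Chn_2\twoheadrightarrow\Bcy$; since $\Chn_2$ embeds in $\Chn_n$ (immediate from the canonical form \eqref{eq:chNF1}), every identity of $\Chn_n$ passes to the submonoid $\Chn_2$ and then to its image $\Bcy$, giving $\mathrm{Var}(\Bcy)\subseteq\mathrm{Var}(\Chn_n)$. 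Replacing your unfinished representation-theoretic construction by this explicit surjection is what is needed to complete the proof.
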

\begin{proof}
The semigroup identities of $\Bcy$ (or equivalently of $\tU_2(\Trop)$) are satisfied by $\Chn_n$ by previous Corollary.
On the other hand, since there is a surjective morphism that sends $\Chn_2$ (which  embeds in $\Chn_n$) onto $\Bcy$,
the semigroup identities admitted by $\Chn_n$ are also satisfied by $\Chn_2$ and $\Bcy$.
\end{proof}

\section{Representations of $\Chn_n$}

A  \textbf{tropical linear representation} of a monoid $\tS$ is a monoid homomorphism
\begin{equation}\label{eq:sg.rep}
 \rho: \tS \To \tM_N(\Trop), \qquad N \in \N,
\end{equation}
i.e., a map satisfying $\rho(x y ) = \rho(x) \rho(x)$ for every $x,y\in \tS$. A representation $\rho$ is \textbf{faithful}, if $\rho$ is injective. When $\rho$ is faithful, $\tS$ inherits all the semigroup identities satisfied by $ \tM_N(\Trop)$. A \hlt{semigroup identity} is a pair $\sid{u}{v}$  of two words $u,v \in \varX^*$ on an alphabet $\varX$, and $\tS$ \hlt{satisfies} ~$\sid{u}{v}$,   if
\begin{equation*}\label{eq:s.id}
\text{ $ \phi(u)= \phi(v)$ \ for every semigroup homomorphism $\phi
:\varX^+ \longrightarrow \tS$.}
\end{equation*}
In other words,  the equality $u = v$ holds for any substitution of elements of $\tS$ into the letters of $u$ and $v$.


%
%

Given a representation $\rho: \Chn_n \to \Mat_{N}(\Trop)$ of $\Chn_n = \langle a_1, \dots, a_n \rangle$, clearly
$$ \begin{array}{lll}
\quad \rho(a_j a_k a_ i) & = \rho(a_k a_j a_i) & = \rho(a_k a_i a_j)  \\[1mm]
= \rho(a_j)\rho( a_k) \rho( a_ i) & = \rho(a_k)\rho( a_j)\rho( a_i) & = \rho(a_k)\rho( a_i)\rho( a_j)
   \end{array}
$$ for all $i \leq j \leq k$,
since $\rho$ is a monoid homomorphism.

We inductively build a faithful linear representation for $\Chn_{n+1}$ out of a faithful representation for $\Chn_{n} $, i.e., a construction by induction on the number of generators.
Appendix \ref{apx:A} presents  an explicit  faithful representation for the base case $\Chn_3$.

\begin{lemma}
Let $\rho: \Chn_n \to \Mat_{N}(\Trop)$  be a representation of $\Chn_n$. For every $\ell = 1,\dots,n$, the map

\begin{equation}\label{e:defRhoi}
\rho_\ell: \Chn_{n+1} \To \Mat_{N}(\Trop), \qquad  \rho_\ell: a_j \longmapsto \left\{ \begin{array}{ll}
                     \rho(a_j)& \textnormal{ if } j\le \ell, \\[1mm]
		     \rho(a_{j-1})& \textnormal{ otherwise,}
                    \end{array}\right.
\end{equation}
defines a representation of $\Chn_n$.
\end{lemma}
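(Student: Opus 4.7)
The plan is to show that $\rho_\ell$ factors through a natural monoid morphism $\pi_\ell:\Chn_{n+1}\to\Chn_n$, which reduces the verification to checking that the underlying letter map preserves the Chinese relations \eqref{eq:chn.relations}. Concretely, define $\sig:\{1,\dots,n+1\}\to\{1,\dots,n\}$ by $\sig(j)=j$ for $j\le\ell$ and $\sig(j)=j-1$ for $j>\ell$, so that $\rho_\ell(a_j)=\rho(a_{\sig(j)})$. The key observation is that $\sig$ is weakly order-preserving: if $j_1\le j_2$ then $\sig(j_1)\le\sig(j_2)$, as is immediate by considering the two cases $j_2\le\ell$ and $j_2>\ell$.

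Next, I would verify that sending $a_j\mapsto a_{\sig(j)}$ extends to a well-defined monoid homomorphism $\pi_\ell:\Chn_{n+1}\to\Chn_n$. By the universal property of the presentation, it suffices to check that every defining relation of $\Chn_{n+1}$ is sent to an equality in $\Chn_n$. Given $i\le j\le k$ in $\{1,\dots,n+1\}$, weak monotonicity of $\sig$ yields $\sig(i)\le\sig(j)\le\sig(k)$ in $\{1,\dots,n\}$, so applying \eqref{eq:chn.relations} in $\Chn_n$ directly gives
$$a_{\sig(j)}a_{\sig(k)}a_{\sig(i)} \;=\; a_{\sig(k)}a_{\sig(j)}a_{\sig(i)} \;=\; a_{\sig(k)}a_{\sig(i)}a_{\sig(j)}.$$
Hence $\pi_\ell$ is a well-defined monoid homomorphism.

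Finally, since $\rho$ is by hypothesis a monoid homomorphism $\Chn_n\to\Mat_N(\Trop)$, the composition $\rho_\ell=\rho\circ\pi_\ell$ is a monoid homomorphism $\Chn_{n+1}\to\Mat_N(\Trop)$, which is precisely a tropical linear representation of $\Chn_{n+1}$. There is no real obstacle here; the only thing to watch is the bookkeeping at the threshold $j=\ell+1$, where one must check that $\sig$ does not invert the order across the gap. This is handled by the single case analysis above.
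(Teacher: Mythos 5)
Your proof is correct and is essentially the paper's argument: the paper also observes that any ordered triple $i'\le j'\le k'$ in $\{1,\dots,n+1\}$ is mapped to an ordered triple $i\le j\le k$ in $\{1,\dots,n\}$, so the Chinese relations are inherited from those satisfied by the $\rho(a_j)$. Your extra packaging via the factoring morphism $\pi_\ell$ is a mild (and harmless) refinement of the same idea.
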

\begin{proof}

If $1\le i' \le j' \le k' \le n+1$, then
$\big(\rho_\ell(a_{i'}),\rho_\ell(a_{j'}),\rho_\ell(a_{k'})\big)=\big(\rho(a_{i}),\rho(a_{j}),\rho(a_{k})\big)$
for some $1\le i\le j\le k\le n$.
Therefore, the relations of $\Chn_{n+1}$ are satisfied by the images of the generators  $\rho_\ell(a_{j'})$
because the relations of $\Chn_{n}$ are satisfied by the images of the generators $\rho(a_{j})$.
%
\end{proof}

 Given a representation $\rho: \Chn_n \to \Mat_{N}(\Trop)$ of $\Chn_n$,  $n\ge 3$, using \eqref{e:defRhoi}
we construct the combined representation for $\Chn_{n+1}$:
\begin{equation}\label{eq:3xrep}
\tlrho : = (\rho_r ,\rho_s ,\rho_t) :\Chn_{n+1} \To (\Mat_{N}(\Trop))^3, \qquad r < s< t.
\end{equation}
 To prove faithfulness of $\tlrho$, we need some basic auxiliary lemmas, which are used later to manipulate the canonical  form \eqref{eq:chNF1} and its matrix image  under representations.

\begin{lemma}\label{l:aiaj}
 The word $a_ka_i$ commutes with $a_i$ and $a_k$ for any $i\le k$.
\end{lemma}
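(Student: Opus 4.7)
The plan is to reduce both commutation claims directly to the defining Chinese relations \eqref{eq:chn.relations}, by choosing the middle index of the relation appropriately. Since the relations demand $i \leq j \leq k$, the trick is to recognize that nothing prevents $j$ from coinciding with $i$ or with $k$, which yields the two identities we need essentially for free.

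First I would write out what the lemma requires: since juxtaposition in the monoid is associative, it suffices to verify the two equalities
\[
a_k a_i a_i \ds = a_i a_k a_i \qquad \text{and} \qquad a_k a_i a_k \ds = a_k a_k a_i.
\]
For the first equality, I would apply \eqref{eq:chn.relations} with the triple $i \leq i \leq k$, i.e.\ taking the middle generator of the relation to be $a_i$; the chain $a_j a_k a_i = a_k a_j a_i$ specializes to $a_i a_k a_i = a_k a_i a_i$, which is exactly what is needed. For the second equality, I would instead take the triple $i \leq k \leq k$; the chain $a_k a_j a_i = a_k a_i a_j$ specializes to $a_k a_k a_i = a_k a_i a_k$.

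Combining, $(a_k a_i) a_i = a_i (a_k a_i)$ and $(a_k a_i) a_k = a_k (a_k a_i)$, so $a_k a_i$ commutes with both $a_i$ and $a_k$. The degenerate case $i = k$ is trivial since then $a_k a_i = a_i^2$ already lies in the subsemigroup generated by $a_i = a_k$.

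I do not anticipate any real obstacle here; the only thing worth flagging is the (intentional) use of the relation at the boundary of its index range, where the middle index equals one of the outer ones. Everything else is bookkeeping.
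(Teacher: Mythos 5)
Your proof is correct and is exactly the paper's argument: the paper's one-line proof likewise specializes the Chinese relation \eqref{eq:chn.relations} to $j=i$ and $j=k$ respectively, and you have simply written out the two resulting identities explicitly. No issues.
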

\begin{proof}
Follows from \eqref{eq:chn.relations} by taking $j=i$ and $j=k$, respectively.
\end{proof}

\begin{lemma}\label{l:akaiaj}
 The words $a_ka_j$ and $a_ka_i$ commute for any  $i\le j\le k$.
\end{lemma}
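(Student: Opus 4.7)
The statement to prove is that $(a_k a_j)(a_k a_i) = (a_k a_i)(a_k a_j)$ whenever $i \le j \le k$. My plan is to reduce both sides to a common word of the form $a_k^2 \, a_? \, a_?$ by pulling the interior copy of $a_k$ to the left, and then invoke the defining relation \eqref{eq:chn.relations} to swap the remaining two letters.

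First, I would use the preceding Lemma \ref{l:aiaj} twice. Applied to the pair $j \le k$, it says that the word $a_k a_j$ commutes with $a_k$, hence
\[
a_k a_j a_k \;=\; a_k \, a_k a_j.
\]
Multiplying on the right by $a_i$ gives $a_k a_j a_k a_i = a_k^2 a_j a_i$. Symmetrically, applied to the pair $i \le k$, the same lemma yields $a_k a_i a_k = a_k^2 a_i$, so $a_k a_i a_k a_j = a_k^2 a_i a_j$. Both sides of the desired equality have now been rewritten with the middle $a_k$ pushed to the front.

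It therefore remains to check that $a_k^2 a_j a_i = a_k^2 a_i a_j$. This follows at once from \eqref{eq:chn.relations} with the given indices $i \le j \le k$, which gives $a_k a_j a_i = a_k a_i a_j$; prepending an extra $a_k$ on the left produces the required identity. Concatenating the three steps yields
\[
a_k a_j \cdot a_k a_i \;=\; a_k^2 \, a_j a_i \;=\; a_k^2 \, a_i a_j \;=\; a_k a_i \cdot a_k a_j.
\]

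There is no real obstacle here; the only subtlety is to notice that although the ``bare'' commutation $a_j a_i = a_i a_j$ is false in $\Chn_n$, prepending a single $a_k$ turns it into an instance of \eqref{eq:chn.relations}, and that Lemma \ref{l:aiaj} provides exactly the leverage needed to expose that prefix on both sides.
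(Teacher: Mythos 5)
Your proof is correct and follows essentially the same route as the paper's: both amount to a short chain of rewritings using the defining relation \eqref{eq:chn.relations}. The only difference is presentational --- you normalize both products to the common word $a_k^2 a_i a_j$ by invoking Lemma \ref{l:aiaj} to pull the inner $a_k$ leftward, whereas the paper chains one product directly into the other by applying the relation to the subword $a_j a_k a_i$ and then to $a_k a_k a_i$; the underlying instances of the relation are the same.
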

\begin{proof}
Apply ~\eqref{eq:chn.relations} twice to get
$$ (a_ka_j)(a_ka_i)=a_k(a_ja_ka_i)=a_k(a_ka_ia_j)=(a_ka_ka_i)a_j=(a_ka_ia_k)a_j=(a_ka_i)(a_ka_j).$$
\vskip -5mm
\end{proof}

For any element $x\in \Chn_n$, written in the canonical form \eqref{eq:chNF1} as $\NF(x)$, there is a one-to-one correspondence between $\NF(x)$ and the $n(n+1)$-tuple
$$ \k(n) := (k_{11}, k_{21}, k_{22}, \dots, k_{i1}, k_{i2},\dots, k_{(i-1)i},k_{ii}, \dots,  k_{n1}, k_{n2},\dots, k_{n(n-1)},k_{nn} )$$
with $k_{ij} \in \Z_+$. Accordingly,  $\NF(x)$ is written in terms of  the $n$ generators $a_1, \dots, a_n$ as $$ \NF(x) = \NF_{\k(n)}(a_1,\dots, a_n), \qquad x \in \Chn_n.$$
For short, we write $\k$ for $\k(n)$ and $\tlbk$ for $\k(n+1)$.

\begin{lemma}\label{l:NFn+1}
$\NF_{\tlbk}\; (a_1,\dots,a_\ell,a_\ell,a_{\ell+1},\dots, a_n)=\NF_{{\k}}\;(a_1,\dots, a_n)$ where

\begin{equation}\label{e:NFn+1}
k_{ji}=\left\{ \begin{array}{ll}
                 \tlk_{ji}   & \textnormal{ if } j< \ell,\\[1mm]
                 \tlk_{\ell i}+\tlk_{(\ell+1) i}  & \textnormal{ if } j = \ell > i ,\\[1mm]
                 \tlk_{\ell \ell }+2 \tlk_{(\ell+1)\ell}+\tlk_{(\ell+1)(\ell+1)} & \textnormal{ if } j = i = \ell,\\[1mm]
		 \tlk_{(j+1)i} & \textnormal{ if } j> \ell > i ,\\[1mm]
		 \tlk_{(j+1)\ell}+\tlk_{(j+1)(\ell+1)} & \textnormal{ if } j> i = \ell ,\\[1mm]
		 \tlk_{(j+1)(i+1)} & \textnormal{ if } j,i > \ell.\\
                    \end{array}\right.
\end{equation}
\end{lemma}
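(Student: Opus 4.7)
The plan is to expand the canonical form of $\Chn_{n+1}$ block by block, apply the substitution $a_{\ell+1}\mapsto a_\ell$, $a_j\mapsto a_{j-1}$ for $j>\ell+1$, and reassemble each block into the canonical form of $\Chn_n$. Write
$\NF_{\tlbk}(a_1,\dots,a_{n+1})=\tilde b_1\tilde b_2\cdots \tilde b_{n+1}$ as in \eqref{eq:chNF2}, and let $\tilde b_j^{\,\sharp}$ denote $\tilde b_j$ after substitution. It suffices to show that $\tilde b_1^{\,\sharp}\cdots \tilde b_{n+1}^{\,\sharp}$ equals $b_1\cdots b_n$ with the $k_{ji}$ given by \eqref{e:NFn+1}, and because the blocks are separated by strict inequalities of generator indices, this can be verified factor by factor.

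First, for $j<\ell$ the block $\tilde b_j$ only involves generators $a_1,\dots,a_j$, on which the substitution is the identity, hence $\tilde b_j^{\,\sharp}=b_j$ with $k_{ji}=\tlk_{ji}$. At the other end, for $j>\ell+1$, the substitution turns $\tilde b_j$ into
\[
\tilde b_j^{\,\sharp}=\prod_{i<\ell}(a_{j-1}a_i)^{\tlk_{ji}}\cdot (a_{j-1}a_\ell)^{\tlk_{j\ell}}(a_{j-1}a_\ell)^{\tlk_{j(\ell+1)}}\cdot\prod_{\ell<i<j-1}(a_{j-1}a_i)^{\tlk_{j(i+1)}}\cdot a_{j-1}^{\tlk_{jj}},
\]
and the two adjacent powers of $(a_{j-1}a_\ell)$ merge into a single factor with exponent $\tlk_{j\ell}+\tlk_{j(\ell+1)}$; this matches $b_{j-1}$ in $\Chn_n$ under the substitution rules for $j>\ell$ in \eqref{e:NFn+1}.

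The main work is the central block, namely showing that $\tilde b_\ell^{\,\sharp}\cdot\tilde b_{\ell+1}^{\,\sharp}=b_\ell$. After substitution,
\[
\tilde b_\ell^{\,\sharp}\tilde b_{\ell+1}^{\,\sharp}= \underbrace{\prod_{i<\ell}(a_\ell a_i)^{\tlk_{\ell i}}}_{A}\cdot a_\ell^{\tlk_{\ell\ell}}\cdot \underbrace{\prod_{i<\ell}(a_\ell a_i)^{\tlk_{(\ell+1)i}}}_{B}\cdot a_\ell^{2\tlk_{(\ell+1)\ell}}\cdot a_\ell^{\tlk_{(\ell+1)(\ell+1)}},
\]
where I have already collapsed the substituted $(a_\ell a_\ell)^{\tlk_{(\ell+1)\ell}}$ into $a_\ell^{2\tlk_{(\ell+1)\ell}}$. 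By Lemma~\ref{l:aiaj} applied with $k=\ell$, the letter $a_\ell$ commutes with every $a_\ell a_i$ for $i<\ell$, so $a_\ell^{\tlk_{\ell\ell}}$ can be pushed rightward through $B$, yielding $A\cdot B\cdot a_\ell^{\tlk_{\ell\ell}+2\tlk_{(\ell+1)\ell}+\tlk_{(\ell+1)(\ell+1)}}$. Then by Lemma~\ref{l:akaiaj}, the factors $(a_\ell a_i)$ for different $i<\ell$ all pairwise commute, so $A\cdot B$ can be reordered and merged into $\prod_{i<\ell}(a_\ell a_i)^{\tlk_{\ell i}+\tlk_{(\ell+1)i}}$. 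This gives exactly $b_\ell$ with the diagonal and off-diagonal exponents prescribed by \eqref{e:NFn+1}.

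The hard part is the central block: one must verify not only that the terms can be rearranged, but that the rearrangement actually uses only the commutation relations provided by Lemmas~\ref{l:aiaj} and \ref{l:akaiaj} (so the identity holds in $\Chn_n$, not merely in some quotient). Once this central identity is established, concatenating the three pieces yields $\tilde b_1^{\,\sharp}\cdots\tilde b_{n+1}^{\,\sharp}=b_1\cdots b_n=\NF_{\k}(a_1,\dots,a_n)$, completing the proof.
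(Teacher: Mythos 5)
Your proposal is correct and follows essentially the same route as the paper: expand the canonical form block by block, observe that the blocks with $j<\ell$ and $j>\ell+1$ transform by direct reindexing (with the two adjacent powers of $(a_{j-1}a_\ell)$ merging), and use Lemmas~\ref{l:aiaj} and~\ref{l:akaiaj} to collapse $\tilde b_\ell^{\,\sharp}\tilde b_{\ell+1}^{\,\sharp}$ into the single canonical block $b_\ell$. The commutation verification you flag as ``the hard part'' is exactly what your third paragraph already carries out, and it is the same computation the paper performs.
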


\begin{proof}
Use \eqref{eq:chNF1} to write
$$\begin{array}{ll}\NF_{\tlbk}(a_1,\dots,a_\ell,a_\ell,a_{\ell+1}\dots, a_n) & =
b_1\cdots b_\ell b'_{\ell+1} \cdots b'_{n+1} \\[2mm] & =
b_1\cdots b_\ell\widetilde{b}_\ell\cdots \widetilde{b}_n\end{array}$$
in terms of $b_j$'s and $b'_j$'s, where the $b_j$'s  are defined by \eqref{eq:chNF2} and the $b_j'$'s, for $j \geq \ell$, are given as
$$b'_{j} =(a_{j-1} a_1)^{k_{j1}}\cdots(a_{j-1} a_\ell)^{k_{j\ell}}(a_{j-1}a_{\ell})^{k_{j(\ell+1)}}\cdots(a_{j-1}a_{j-2})^{k_{j(j-1)}}a_{j-1}^{k_{jj}}.$$
By Lemmas~\ref{l:aiaj} and~\ref{l:akaiaj}, we see that
$$\begin{array}{ll}
 b_\ell\widetilde{b}_\ell&=(a_\ell a_1)^{k_{\ell 1}} (a_\ell a_2)^{k_{\ell 2}} \cdots  (a_\ell a_{\ell -1})^{k_{\ell(\ell-1)}} a_\ell^{k_{\ell \ell }}
 (a_\ell a_1)^{k_{(\ell+1)1}}\cdots \\ & \quad \cdots (a_\ell a_{\ell-1})^{k_{(\ell+1)(\ell-1)}}(a_\ell a_\ell)^{k_{(\ell+1)\ell}}a_\ell^{k_{(\ell+1)(\ell +1)}}\\[2mm]
&= (a_\ell a_1)^{k_{\ell 1}+k_{(\ell +1)1}} \cdots (a_\ell a_{\ell -1})^{k_{\ell (\ell -1)}+k_{(\ell+1)(\ell-1)}} a_\ell^{k_{\ell \ell}+2k_{(\ell+1)\ell}+k_{(\ell+1)(\ell+1)}}.
\end{array}$$
For $j> \ell $ we have
$$\begin{array}{ll}
  \widetilde{b}_j=& (a_ja_1)^{k_{(j+1)1}}\cdots(a_ja_{\ell-1})^{k_{(j+1)(\ell-1)}}(a_ja_\ell)^{k_{(j+1)\ell}+k_{(j+1)(\ell+1)}} (a_ja_{\ell+1})^{k_{(j+1)(\ell+1)}}\cdots \\[1mm] & \cdots  (a_ja_{j-1})^{k_{(j+1)j}}a_j^{k_{(j+1)(j+1)}}.
\end{array}$$ \vskip -5mm
\end{proof}

We can now prove the following lemma.
\begin{lemma}\label{l:Induc}
 Let $\rho: \Chn_n \to \Mat_{N}(\Trop)$, $n\ge 3$,  be a faithful representation of $\Chn_n$.
 Then the representation~$$\tlrho := (\rho_1, \rho_2, \rho_n ) : \Chn_{n+1} \To (\Mat_{N}(\Trop))^3 $$ in \eqref{eq:3xrep} is faithful.
If $\k\mapsto \rho(\NF_{\k} \; (a_1,\dots,a_n))$ is a restriction of an affine injective map, then
$\tlbk\mapsto \tlrho(\NF_{\tlbk}(a_1,\dots,a_{n+1}))$ is also a restriction of an affine injective map.
 \end{lemma}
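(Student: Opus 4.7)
The plan is to invert $x \mapsto \tlrho(x)$ on canonical forms. Given $x = \NF_{\tlbk}(a_1,\dots,a_{n+1})$, the definition \eqref{e:defRhoi} identifies $\rho_\ell$ with the substitution $a_{\ell+1} \mapsto a_\ell$ followed by $\rho$. Lemma \ref{l:NFn+1} then gives $\rho_\ell(x) = \rho(\NF_{\k^{(\ell)}}(a_1,\dots,a_n))$, where $\tlbk \mapsto \k^{(\ell)}$ is the explicit \emph{linear} map read off from \eqref{e:NFn+1}. Faithfulness of $\rho$ recovers $\k^{(\ell)}$ from $\rho_\ell(x)$, so the problem reduces to reconstructing $\tlbk$ from the triple $\big(\k^{(1)}, \k^{(2)}, \k^{(n)}\big)$.

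To do so I would use a case analysis of \eqref{e:NFn+1}. The $\ell = n$ formula directly yields $\tilde k_{j'i'}$ for every $j' \le n-1$, and the $\ell = 1$ formula directly yields $\tilde k_{j'i'}$ for every $i' \ge 3$. Together these cover all of $\tlbk$ except the four entries with $j' \in \{n, n+1\}$ and $i' \in \{1, 2\}$; for these, \eqref{e:NFn+1} contributes only the four sum equations $\tilde k_{ni} + \tilde k_{(n+1)i}$ (for $i = 1, 2$, from $\rho_n$) and $\tilde k_{(j+1)1} + \tilde k_{(j+1)2}$ (for $j = n-1, n$, from $\rho_1$), which form a rank-three system. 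The use of $\rho_2$ is precisely to break this degeneracy: its $j > \ell > i$ branch gives $k^{(2)}_{j\, 1} = \tilde k_{(j+1)\, 1}$ for $j \ge 3$, and in particular recovers $\tilde k_{(n+1)\, 1}$ outright. Back-substituting into the sum equations successively determines $\tilde k_{n1}$, $\tilde k_{(n+1)\, 2}$ and $\tilde k_{n2}$, completing the reconstruction and proving faithfulness.

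For the affine-injective clause, each $\tlbk \mapsto \rho_\ell(\NF_{\tlbk})$ factors as the linear map $\tlbk \mapsto \k^{(\ell)}$ from \eqref{e:NFn+1} followed by the affine injective map $\k \mapsto \rho(\NF_\k)$ supplied by the inductive hypothesis; hence $\tlrho \circ \NF$ is itself affine. The reconstruction above is expressed by linear formulae in the entries of the triple, so it provides an affine left inverse, and $\tlrho \circ \NF$ is therefore the restriction of an affine injective map. I expect the only real obstruction to be pinpointing why $(\rho_1, \rho_n)$ is insufficient and why $\rho_2$ (rather than some other middle index) closes the gap uniformly in $n \ge 3$; once the rank-three degeneracy is identified, the rest is routine bookkeeping with \eqref{e:NFn+1}.
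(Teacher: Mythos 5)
Your proof is correct and follows essentially the same route as the paper's: reduce to reconstructing $\tlbk$ from the three images via Lemma \ref{l:NFn+1}, note that $\ell=n$ and $\ell=1$ determine everything except the four entries $\tlk_{n1},\tlk_{n2},\tlk_{(n+1)1},\tlk_{(n+1)2}$, which are constrained only by a rank-three system of sums, and use $\ell=2$ to break the degeneracy. The only (immaterial) difference is which $\ell=2$ equation you invoke: you take the $j>\ell>i$ branch to recover $\tlk_{(n+1)1}$ directly, whereas the paper uses the $j>i=\ell$ branch to get $\tlk_{(n+1)2}+\tlk_{(n+1)3}$ and subtracts the already-known $\tlk_{(n+1)3}$.
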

\begin{proof}
By \eqref{e:defRhoi} we have
$$
\begin{array}{ll}
\rho_\ell\big(\NF_{\tlbk}(a_1,\dots,a_{n+1})\big) &=\rho\big(\NF_{\tlbk}(a_1,\dots,a_\ell,a_\ell,a_{\ell+1}, \dots, a_{n})\big)\\[2mm]
& = \rho\big(\NF_{\k}(a_1,\dots,a_\ell, a_{\ell+1}, \dots, a_{n})\big).
\end{array}
$$
Thus, by Lemma~\ref{l:NFn+1} we see that, if $\k\mapsto \rho(\NF_{\k}(a_1,\dots,a_n))$ is a restriction of an affine map, then
$\tlbk\mapsto \tlrho(\NF_{\tlbk}(a_1,\dots,a_{n+1}))$ is also a restriction of an affine map.

We show that,  $\tlbk$ can be determined, assuming that $\rho_\ell \big(\NF_{\tlbk}(a_1,\dots,a_{n+1})\big)$ (or the image of some vector~$\tlbk$ by the affine map it defines) is known
for $\ell = 1,2,n$.
By Lemma~\ref{l:NFn+1}
and the faithfulness of~$\rho$ (or the injectivity of the affine map), we know the terms $k_{ji}$ of the $n(n+1)$-tuple $\k$, cf. \eqref{e:NFn+1}, and we need to determine the $\tlk_{ji}$'s  explicitly. From \eqref{e:NFn+1} and the indices $\ell = 1, 2, n$ we deduce:
$\ell = n $ gives~ $\tlk_{ji}$ for $i\leq j<n$, and $\ell =1$ gives $\tlk_{ji}$ for $i,j> 2$.
It remains to determine
$\tlk_{n1},\tlk_{n2},\tlk_{(n+1)1},\tlk_{(n+1)2}$. But, the sums $\tlk_{n1}+\tlk_{(n+1)1}$ and $\tlk_{n2}+\tlk_{(n+1)2}$ are known from the case of $\ell =n$, while  $\tlk_{n1}+\tlk_{n2}$ and $\tlk_{(n+1)1}+\tlk_{(n+1)2}$ are known from the case of $\ell =1$.
The case of  $\ell =2$ gives the sum $\tlk_{(n+1)2}+\tlk_{(n+1)3}$, from which we deduce  $\tlk_{(n+1)2}$, as $\tlk_{(n+1)3}$ is known.
Then, by substitution, we obtain the values of the other three terms.
\end{proof}

\begin{proof}[\textbf{Proof of Theorem I}]
  Proof by induction of the number of generators of $\Chn_n$, $n \geq 3$.
  The case of $n=3$ is given by Lemma \ref{lem:3Rep} in the appendix.
  The induction step follows from Lemma \ref{l:Induc}.
  Thus, $(\Mat_{N}(\Trop))^3$ embeds in $\Mat_{3N}(\Trop)$ as a 3-block diagonal matrix, each of these blocks by itself is
a block diagonal matrix whose blocks are $2\times 2$ triangular matrices.
Moreover, $\k\mapsto \rho(\NF_{\k}(a_1,\dots,a_n))$ is a restriction of an affine map $$\al :\R^{\frac{n(n+1)}{2}}\To\R^{3^n}.$$
Elementary linear algebra implies that there is a projection $\phi$ of~$\R^{3^n}$ on~$\frac{n(n+1)}{2}$
of its coordinates such that~$\phi \circ \al$ is kept injective. Thus, the representation by the blocks appearing in the projection is faithful
and Theorem~I is proved.
\end{proof}

\begin{remark}\label{rem:cf}
   The use of the faithful representation \eqref{eq:3xrep} allows to compute the canonical form \eqref{eq:chNF1} of a given word $x \in \Chn_n$. Indeed, Lemma \ref{l:Induc} determines inductively   all the powers $k_{ij}$ in  \eqref{eq:chNF1}.
\end{remark}

\begin{appendix}

\section{Linear representations of $\Chn_3$}\label{apx:A}

Considering the Chinese monoid $\Chn_3 = \langle a,b, c \rangle$ of rank $3$, we begin by constructing  three linear representations
$$ \rho_\ell: \Chn_3 \To \Mat_2(\Trop), \qquad \ell = 1,2,3,$$
in terms of generator maps. These representations are later combined together to a faithful representation of $\Chn_3$.  To simplify notations, in the below  matrices we write ``$\Minf$" for ``$-\infty$" and
$k_i$ for $k_{ii}$,  $i = 1,2,3.$
Let~ $x $ be an element of $\Chn_3$, written in the canonical form
\begin{equation}\label{eq:xCan}
 x =  (a)^{k_1} (ba)^{ k_{21}}  b^{ k_2}  (ca)^{ k_{31}} (cb)^{k_{32}} (c)^{k_{3}}.
\end{equation}

The representations were found by extensive search among triangular matrices of size two with small entries, performed with ScicosLab,
a fork from SciLab which includes a max-plus toolbox.
The computations below, which finally show that $\rho$ is a faithful representation were performed with Mathematica.
\subsection{Representations  of $\Chn_3$}
$ $
\pSkip
\textbf{Representation I:} Let $\rho_1$  be defined by
\begin{equation}\label{eq:Rep.I.1}
   a \mapsto A =  \left(
\begin{array}{cc}
 1 & 0 \\
 \Minf  &0 \\
\end{array}
\right),
\qquad b \mapsto B = \left(
\begin{array}{cc}
 0 & 0 \\
 \Minf  & 1 \\
\end{array}
\right),
\qquad c \mapsto C =  \left(
\begin{array}{cc}
 0 & 0 \\
 \Minf  & 1 \\
\end{array}
\right), \tag{I.1}
\end{equation}
for which we have the products
\begin{equation}\label{eq:Rep.I.2}
\begin{array}{cc}
BCA = CBA = CAB =\left(\begin{array}{cc}
 1 & 1 \\
 \Minf  & 2 \\
\end{array}
\right),
\\[2mm]
BBA = BAB =\left(
\begin{array}{cc}
 1& 1 \\
 \Minf  & 2 \\
\end{array}
\right),
 \qquad
 ABA = BAA =\left(
\begin{array}{cc}
 2 & 1 \\
 \Minf  & 1 \\
\end{array}
\right),
 \\
B C B = CBB =
\left(
\begin{array}{cc}
 0 & 2 \\
 \Minf  & 3 \\
\end{array}
\right),
\qquad
CCB = CBC =\left(
\begin{array}{cc}
 0 & 2 \\
 \Minf  & 3 \\
\end{array}
\right),
 \\
A C A = CAA =
\left(
\begin{array}{cc}
 2 & 1 \\
 \Minf  & 1 \\
\end{array}
\right),
\qquad
CCA = CAC =\left(
\begin{array}{cc}
 1& 1 \\
 \Minf  & 2 \\
\end{array}
\right),
\\[2mm]
BAC =\left(
\begin{array}{cc}
 1 & 1 \\
 \Minf  & 2 \\
\end{array}
\right), \qquad ABC = AC B =
\left(
\begin{array}{cc}
 1 & 2 \\
 \Minf  & 2 \\
\end{array}
\right),
\end{array} \tag{I.2}\end{equation}
so that relation \eqref{eq:chn.relations} is satisfied.
Taking powers of $A,B,C$, we get
\begin{equation}\label{eq:Rep.I.3}
\begin{split}
 A^{k_1} & = \left(
\begin{array}{cc}
 k_1 & \max \{ 0,\; k_1\}-1 \\
 \Minf  & 0 \\
\end{array}
\right)
=\left(
\begin{array}{cc}
 k_1 & k_1-1 \\
 \Minf  & 0 \\
\end{array}
\right),
\\
B^{k_2}&  =  \left(
\begin{array}{cc}
 0 & \max \{ 0,\; k_2\}-1 \\
 \Minf  & k_2 \\
\end{array}
\right) = \left(
\begin{array}{cc}
 0 &  k_2-1 \\
 \Minf  & k_2 \\
\end{array}
\right) ,
\\
C^{k_3} & =  \left(
\begin{array}{cc}
 0 & \max \{ 0, \; k_3\}-1 \\
 \Minf  & k_3 \\
\end{array}
\right) =
\left(
\begin{array}{cc}
 0 &  k_3 -1 \\
 \Minf  & k_3 \\
\end{array}
\right),
\end{split}
\tag{I.3}
\end{equation}
where for $k_1 = k_2 = k_3 =0$ we have
$$A^0=B^0=C^0= I_1 = \left(
\begin{array}{cc}
 0 & -1 \\
 \Minf  & 0 \\
\end{array}
\right), $$
which is an identity matrix for the semigroup generated by $A$, $B$, and $C$.

For  products of $A$, $B$, and $C$  we have
\begin{equation}\label{eq:Rep.I.4}
BA = \left(
\begin{array}{cc}
 1 & 0 \\
 \Minf  & 1 \\
\end{array}
\right),
\qquad CA =  \left(
\begin{array}{cc}
 1 & 0 \\
 \Minf  & 1 \\
\end{array}
\right),
\qquad CB = \left(
\begin{array}{cc}
 0 & 1 \\
 \Minf  & 2 \\
\end{array}
\right),
\tag{I.4}
\end{equation}
have the  powers
\begin{equation}\label{eq:Rep.I.4.a}
\begin{split}
(BA)^{k_{21}} & =
\left(
\begin{array}{cc}
 k_{21} & k_{21}-1 \\
 \Minf  & k_{21} \\
\end{array}
\right),
\\
(CA)^{k_{31}} & =  \left(
\begin{array}{cc}
 k_{31} & k_{31} -1 \\
 \Minf  & k_{31} \\
\end{array}
\right),
\\
(CB)^{k_{32}} & =  \left(
\begin{array}{cc}
 0 & \max \{ 0,\; 2 k_{32}\}-1 \\
 \Minf  & 2 k_{32} \\
\end{array}
\right)
=  \left(
\begin{array}{cc}
 k_{32} & 2 k_{32}-1 \\
 \Minf  & 2 k_{32} \\
\end{array}
\right).
\end{split}
\tag{I.5}
\end{equation}
Computing the matrix product corresponding to the canonical form \eqref{eq:chNF1}, we obtain
\begin{equation}\label{eq:Rep.I.6}
\begin{split} \rho_1(x) & = \left(
\begin{array}{cc}
 k_1+k_{21}+k_{31} & k_{21}+k_{31}-2 +\max
 \left\{
 \begin{array}{l}k_1+1, \\ k_1+k_3+2 k_{32}, \\ k_1+k_3+2 k_{32}+1, \\
 k_2+k_3+2 k_{32}+1, \\ k_1+k_2+k_3+2 k_{32}, \\ k_1+k_2+k_3+2 k_{32}+1\end{array}\right\} \\
 -\infty  & k_2+k_3+k_{21}+k_{31}+2 k_{32} \\
\end{array}
\right) \\ & = \left(
\begin{array}{cc}
 k_1+k_{21}+k_{31} & k_{21}+k_{31} + k_1+k_2+k_3+2 k_{32}-1 \\
 -\infty  & k_2+k_3+k_{21}+k_{31}+2 k_{32} \\
\end{array}
\right).
\end{split}
\tag{I.6}
\end{equation}

\pSkip
\textbf{Representation II:} Let $\rho_2$  be given by
\begin{equation}\label{eq:Rep.II.1}
   a \mapsto A =  \left(
\begin{array}{cc}
 1 & 0 \\
 \Minf  & 0 \\
\end{array}
\right),
\qquad b \mapsto B =  \left(
\begin{array}{cc}
 1 & 0 \\
 \Minf  & 0 \\
\end{array}
\right),
\qquad c \mapsto C =  \left(
\begin{array}{cc}
 0 & 0 \\
 \Minf  & 1 \\
\end{array}
\right),
\tag{II.1}
\end{equation}
for which we have the products
\begin{equation}\label{eq:Rep.II.2}
\begin{array}{c}
BCA = CBA = CAB =\left(
\begin{array}{cc}
 2 & 1 \\
 \Minf  & 1 \\
\end{array}
\right),
\\[2mm]
BBA = BAB =\left(
\begin{array}{cc}
 3 & 2 \\
 \Minf  & 0 \\
\end{array}
\right),
 \qquad
 ABA = BAA =\left(
\begin{array}{cc}
 3 & 2 \\
 \Minf  & 0 \\
\end{array}
\right),
 \\
B C B = CBB =
\left(
\begin{array}{cc}
 2 & 1 \\
 \Minf  & 1 \\
\end{array}
\right),
\qquad
CCB = CBC =\left(
\begin{array}{cc}
 1 & 1 \\
 \Minf  & 2 \\
\end{array}
\right),
 \\
A C A = CAA =
\left(
\begin{array}{cc}
 2 & 1 \\
 \Minf  & 1 \\
\end{array}
\right),
\qquad
CCA = CAC =\left(
\begin{array}{cc}
 1& 1 \\
 \Minf  & 2 \\
\end{array}
\right),
\\[2mm]
BAC = ABC = \left(
\begin{array}{cc}
 2 & 2 \\
 \Minf  & 1 \\
\end{array}
\right), \qquad   AC B =  \left(
\begin{array}{cc}
 2 & 1 \\
 \Minf  & 1 \\
\end{array}
\right),
\end{array}\tag{II.2}
\end{equation}
so that relation \eqref{eq:chn.relations} is  satisfied, where $BAC \neq ABC = AC B$.
Taking powers of $A,B,C$, we get
\begin{equation}\label{eq:Rep.II.3}
\begin{split}
 A^{k_1} & = \left(
\begin{array}{cc}
 k_1 & \max \left\{ 0,\; k_1\right\}-1 \\
 \Minf  & 0 \\
\end{array}
\right)
=\left(
\begin{array}{cc}
 k_1 & k_1 -1 \\
 \Minf  & 0 \\
\end{array}
\right),\\
B^{k_2}&  =   \left(
\begin{array}{cc}
  k_2 & \max \left\{ 0,\; k_2\right\}-1 \\
 \Minf  &0 \\
\end{array}
\right) =
\left(
\begin{array}{cc}
  k_2 & k_2-1 \\
 \Minf  & 0  \\
\end{array}
\right),
\\
C^{k_3} & =  \left(
\begin{array}{cc}
 0 & \max \left\{0,\; k_3\right\}-1 \\
 \Minf  & k_3 \\
\end{array}
\right)
=  \left(
\begin{array}{cc}
 0 & k_3-1 \\
 \Minf  & k_3 \\
\end{array}
\right),\end{split}
\tag{II.3}
\end{equation}
where for $k_1 = k_2 = k_3 =0$  we have
$$A^0=B^0=C^0= I_2 = \left(
\begin{array}{cc}
 0 & -1 \\
 \Minf  & 0 \\
\end{array}
\right), $$
which is an identity matrix for the semigroup generated by $A$, $B$, and $C$.

For  products of $A$, $B$, and $C$ we have
\begin{equation}\label{eq:Rep.II.4}
BA = \left(
\begin{array}{cc}
 2 & 2 \\
 \Minf  & 1 \\
\end{array}
\right),
\qquad CA =   \left(
\begin{array}{cc}
 2 & 2 \\
 \Minf  & 1 \\
\end{array}
\right),
\qquad CB = \left(
\begin{array}{cc}
 2 & 1 \\
 \Minf  & 1 \\
\end{array}
\right),
\tag{II.4}
\end{equation}
with   powers
\begin{equation}\label{eq:Rep.II.4.a}
\begin{split}
(BA)^{k_{21}} & =
\left(
\begin{array}{cc}
 2 k_{21}  & -1 + \max\{ 0,\; 2 k_{21} \}  \\
 \Minf  & 0 \\
\end{array}
\right) = \left(
\begin{array}{cc}
 2 k_{21}  & 2 k_{21} -1   \\
 \Minf  & 0 \\
\end{array}
\right),
\\
(CA)^{k_{31}} & =  \left(
\begin{array}{cc}
 k_{31} & k_{31} -1 \\
 \Minf  & k_{31} \\
\end{array}
\right),
\\
(CB)^{k_{32}} & =   \left(
\begin{array}{cc}
  k_{32} &  k_{32} -1 \\
 \Minf  &  k_{32} \\
\end{array}
\right).\end{split}
\tag{II.5}
\end{equation}
Computing the matrix product corresponding to the canonical form \eqref{eq:chNF1}, we obtain
\begin{equation}\label{eq:Rep.II.6}
\begin{split} \rho_2(x) & =\left(
\begin{array}{cc}
 k_1+k_2+2 k_{21}+k_{31}+k_{32} & k_{31}+k_{32}-2  + \max \left\{
 \begin{array}{l}
k_3+1, \\ k_1+k_2+2 k_{21}+1, \\ k_1+k_2+k_3+2 k_{21}, \\ k_1+k_2+k_3+2 k_{21}+1 \end{array} \right\}\\
 -\infty  & k_3+k_{31}+k_{32} \\
\end{array}
\right)
\\[2mm]
& = \left(
\begin{array}{cc}
 k_1+k_2+2 k_{21}+k_{31}+k_{32} & k_{31}+k_{32} + k_1+k_2+k_3+2 k_{21}- 1 \\
 -\infty  & k_3+k_{31}+k_{32} \\
\end{array}
\right) .
\end{split}
\tag{II.6}
\end{equation}

\pSkip
\textbf{Representation III:} Let $\rho_3$  be given by
\begin{equation}\label{eq:Rep.III.1}
   a \mapsto A =  \left(
\begin{array}{cc}
 1 & 1 \\
 \Minf  & 0 \\
\end{array}
\right),
\qquad b \mapsto B =  \left(
\begin{array}{cc}
 0 & 0 \\
 \Minf  & 0 \\
\end{array}
\right),
\qquad c \mapsto C =   \left(
\begin{array}{cc}
 0 & 1 \\
 \Minf  & 1 \\
\end{array}
\right),
\tag{III.1}
\end{equation}
for which we have the products
\begin{equation}\label{eq:Rep.III.2}
\begin{array}{c}
BCA = CBA = CAB =\left(
\begin{array}{cc}
 1 & 1 \\
 \Minf  & 1 \\
\end{array}
\right),
\\[2mm]
BBA = BAB =\left(
\begin{array}{cc}
 1 & 1 \\
 \Minf  & 0 \\
\end{array}
\right),
 \qquad
 ABA = BAA =\left(
\begin{array}{cc}
 2 & 2 \\
 \Minf  &  0 \\
\end{array}
\right),
 \\
B C B = CBB =
\left(
\begin{array}{cc}
 0 & 1 \\
 \Minf  & 1 \\
\end{array}
\right),
\qquad
CCB = CBC =\left(
\begin{array}{cc}
 0 & 2 \\
 \Minf  & 2 \\
\end{array}
\right),
 \\
A C A = CAA =
\left(
\begin{array}{cc}
 2 & 2 \\
 \Minf  & 1 \\
\end{array}
\right),
\qquad
CCA = CAC =\left(
\begin{array}{cc}
 1 & 2 \\
 \Minf  & 2 \\
\end{array}
\right),
\\[2mm]
BAC = ABC = AC B =  \left(
\begin{array}{cc}
 1 & 2 \\
 \Minf  & 1 \\
\end{array}
\right),
\end{array}
\tag{III.2}
\end{equation}
so that relation \eqref{eq:chn.relations} is  satisfied, where $BAC = ABC \neq BAC$.
Taking powers of $A,B,C$, we get
\begin{equation}\label{eq:Rep.I.3.a}
\begin{split}
 A^{k_1} & = \left(
\begin{array}{cc}
 k_1 & \max \left\{0, \; k_1\right\} \\
 \Minf  & 0 \\
\end{array}
\right)
= \left(
\begin{array}{cc}
 k_1 & k_1 \\
 \Minf  & 0 \\
\end{array}
\right),\\
B^{k_2}&  =   \left(
\begin{array}{cc}
 0 & 0 \\
 \Minf  & 0 \\
\end{array}
\right),\\
C^{k_3} & =  \left(
\begin{array}{cc}
 0 & \max \left\{0,\; k_3\right\} \\
 \Minf  & k_3 \\
\end{array}
\right)
=  \left(
\begin{array}{cc}
 0 & k_3 \\
 \Minf  & k_3 \\
\end{array}
\right),\end{split}
\tag{III.3}
\end{equation}
where for $k_1 = k_2 = k_3 =0$  we have
$$A^0=B^0=C^0= I_3 = \left(
\begin{array}{cc}
 0 & 0 \\
 \Minf  & 0 \\
\end{array}
\right), $$
which is an identity matrix for the semigroup generated by $A$, $B$, and $C$.

For  products of $A$, $B$, and $C$, we have
\begin{equation}\label{eq:Rep.III.4}
BA =\left(
\begin{array}{cc}
 1 & 1 \\
 \Minf  & 0 \\
\end{array}
\right),
\qquad CA =   \left(
\begin{array}{cc}
 1 & 1 \\
 \Minf  & 1 \\
\end{array}
\right),
\qquad CB = \left(
\begin{array}{cc}
 0 & 1 \\
 \Minf  & 1 \\
\end{array}
\right),
\tag{III.4}
\end{equation}
have the powers
\begin{equation}\label{eq:Rep.III.4.a}
\begin{split}
(BA)^{k_{21}} & =
\left(
\begin{array}{cc}
 k_{21} & \max \left\{0,\; k_{21}\right\} \\
 \Minf  & 0 \\
\end{array}
\right)
 =
\left(
\begin{array}{cc}
 k_{21} & k_{21} \\
 \Minf  & 0 \\
\end{array}
\right),\\
(CA)^{k_{31}} & =  \left(
\begin{array}{cc}
 k_{31} & k_{31} \\
 \Minf  & k_{31} \\
\end{array}
\right),
\\
(CB)^{k_{32}} & =  \left(
\begin{array}{cc}
 0 & \max \left\{0,\; k_{32}\right\} \\
 \Minf  & k_{32} \\
\end{array}
\right) =  \left(
\begin{array}{cc}
 0 & k_{32} \\
 \Minf  & k_{32} \\
\end{array}
\right) .\end{split}
\tag{III.5}
\end{equation}
Computing the matrix product corresponding to the canonical form \eqref{eq:chNF1}, we obtain
\begin{equation}\label{eq:Rep.III.6}
\begin{split} \rho_3(x) & = \left(
\begin{array}{cc}
 k_1+k_{21}+k_{31} & k_{31}-1+  \max \left\{ \begin{array}{l}
 k_1+k_{21}+1,\\
 k_1+k_3+k_{21}, \\ k_3+k_{32}+1, \\ k_1+k_3+k_{32}, \\ k_1+k_3+k_{21}+k_{32},\\ k_1+k_3+k_{21}+k_{32}+1\end{array} \right\}
 \\
 -\infty  & k_3+k_{31}+k_{32} \\
\end{array}
\right)
\\
& = \left(
\begin{array}{cc}
 k_1+k_{21}+k_{31} & k_{31}+ k_1+k_3+k_{21}+k_{32}
 \\
 -\infty  & k_3+k_{31}+k_{32} \\
\end{array}
\right).
\end{split}
\tag{III.6}
\end{equation}

\begin{remark}
  Note that, since $k_1, k_2, k_{21}, k_{3}, k_{31}, k_{32}$ are non-negative,  for all the above representations,
  the image matrix   \eqref{eq:Rep.I.6}, \eqref{eq:Rep.II.6}, and \eqref{eq:Rep.III.6}, of $x \in \Chn_3$ satisfying Equation~\eqref{eq:xCan} is affine,
  and its entries are given explicitly in terms of the powers
  $k_1, k_2, k_{21}, k_{3}, k_{31}, k_{32}$.

\end{remark}

\subsection{A faithful representation of $\Chn_3$}
Together, the three representations $\rho_1, \rho_2, \rho_3$ from above  provide the representation
\begin{equation}\label{eq:rep.3}
  \rho := (\rho_1, \rho_2, \rho_3) : \Chn_3 \To \big(\Mat_2(\Trop)\big)^3
  \tag{A.1}
\end{equation}
of $\Chn_3$ in terms of generators.
It is easily seen from  \eqref{eq:Rep.I.2}, \eqref{eq:Rep.II.2}, and \eqref{eq:Rep.III.2} that
$$ \rho(bca) = \rho(cba) = \rho(cab), \qquad \rho(aba) = \rho(baa), \qquad \rho(bba) = \rho(bab), $$
$$\rho(bcc) = \rho(cbc), \qquad
\rho(bcb) = \rho(cbb), \qquad \rho(acc) = \rho(cac), \qquad
\rho(aca) = \rho(caa), $$
and all are different from  $\rho(bac) \neq  \rho(abc) \neq \rho(acb)$, which are also different from each other.
$\big(\Mat_2(\Trop)\big)^3$ embeds as diagonal blocks in $\Mat_6(\Trop)$, so that $\rho$ can be realized as
a map $\Chn_3 \to \Mat_6(\Trop)$.

\begin{lemma}\label{lem:3Rep}
  The map \eqref{eq:rep.3} is an injective semigroup homomorphism.
\end{lemma}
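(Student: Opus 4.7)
The statement asserts two things about the combined map $\rho = (\rho_1,\rho_2,\rho_3) : \Chn_3 \to (\Mat_2(\Trop))^3$: that it is a semigroup (equivalently, monoid) homomorphism, and that it is injective. I would handle these two assertions separately.

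For the homomorphism property, the plan is to reduce everything to the universal property of the presentation \eqref{eq:chn.relations}. Each $\rho_\ell$, $\ell = 1,2,3$, is prescribed only on the generators $a,b,c$; to extend the assignment uniquely to a homomorphism out of the presented monoid $\Chn_3$, it is necessary and sufficient that the matrix images satisfy the Chinese relations $a_j a_k a_i = a_k a_j a_i = a_k a_i a_j$ for $i \le j \le k$. This is exactly what is tabulated in \eqref{eq:Rep.I.2}, \eqref{eq:Rep.II.2}, and \eqref{eq:Rep.III.2}: for each of the three representations, all the relevant triple products are computed explicitly and seen to collapse in the required way. Once each $\rho_\ell$ is a well-defined monoid homomorphism into $\Mat_2(\Trop)$, the product map $\rho$ into $(\Mat_2(\Trop))^3$ is a homomorphism componentwise, and composing with the block-diagonal embedding $(\Mat_2(\Trop))^3 \hookrightarrow \Mat_6(\Trop)$ realizes $\rho$ as a map to $\Mat_6(\Trop)$.

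For injectivity, the strategy is to exploit the uniqueness of the canonical form \eqref{eq:chNF1}: every $x \in \Chn_3$ is uniquely written as $x = a^{k_1}(ba)^{k_{21}} b^{k_2} (ca)^{k_{31}} (cb)^{k_{32}} c^{k_3}$, so it is enough to recover the 6-tuple $(k_1, k_2, k_{21}, k_3, k_{31}, k_{32}) \in \Z_+^6$ from $\rho(x)$. The explicit formulas \eqref{eq:Rep.I.6}, \eqref{eq:Rep.II.6}, \eqref{eq:Rep.III.6} exhibit the entries of $\rho_1(x), \rho_2(x), \rho_3(x)$ as affine linear functions of this 6-tuple, so the task becomes solving an inhomogeneous linear system. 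I would do this by successive elimination: the difference of the top-right and bottom-right entries of $\rho_1(x)$ isolates $k_1$, the analogous difference for $\rho_2(x)$ isolates $k_3$; with $k_3$ in hand, the top-right entry of $\rho_3(x)$ yields $k_{32}$, then the bottom-right of $\rho_3(x)$ (or $\rho_2(x)$) gives $k_{31}$, then the top-left of $\rho_1(x)$ gives $k_{21}$, and finally the bottom-right of $\rho_1(x)$ gives $k_2$.

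There is no genuine obstacle here; the work is essentially bookkeeping. The one subtlety worth flagging at the start is that the entries in \eqref{eq:Rep.I.6}, \eqref{eq:Rep.II.6}, \eqref{eq:Rep.III.6} involve maxima over several candidate linear expressions, and they collapse to a single dominant term precisely because the parameters $k_1, k_2, k_{21}, k_3, k_{31}, k_{32}$ are non-negative integers; once this is observed (as in the Remark preceding the lemma), the above linear-recovery argument proceeds cleanly and establishes that $\rho(x) = \rho(x')$ forces the two canonical forms to coincide, hence $x = x'$.
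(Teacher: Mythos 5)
Your proposal follows the paper's own proof in both halves: the homomorphism property is exactly the content of the tabulated triple products \eqref{eq:Rep.I.2}, \eqref{eq:Rep.II.2}, \eqref{eq:Rep.III.2} combined with the presentation of $\Chn_3$, and injectivity is reduced, as in the paper, to the unique solvability of the affine system \eqref{eq:system} in the exponents of the canonical form. The only thing to fix is one step of your elimination chain: writing $X=\rho_1(x)$, $Y=\rho_2(x)$, $Z=\rho_3(x)$, the difference $X_{1,2}-X_{2,2}=k_1-1$ does isolate $k_1$, but the ``analogous'' difference $Y_{1,2}-Y_{2,2}$ equals $k_1+k_2+2k_{21}-1$, not $k_3$; you should instead take $Y_{1,2}-Y_{1,1}=k_3-1$. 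Similarly, the top-right entry of $\rho_3(x)$ alone does not yield $k_{32}$: you need $Z_{1,2}-Z_{1,1}=k_3+k_{32}$ and then subtract the already-recovered $k_3$. With these two corrections the rest of your chain ($k_{31}$ from $Z_{2,2}$, $k_{21}$ from $X_{1,1}$, $k_2$ from $X_{2,2}$) goes through and establishes the unique solvability that the paper merely asserts; your observation that the maxima in \eqref{eq:Rep.I.6}--\eqref{eq:Rep.III.6} collapse because the exponents are non-negative is also the right justification for the affine form of the system.
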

\begin{proof}
Let $x $ be an element of $\Chn_3$, written in the canonical from $$x =  (a)^{k_1} (ba)^{ k_{21}}  b^{ k_2}  (ca)^{ k_{31}} (cb)^{k_{32}} (c)^{k_{3}}.$$
The entries of the matrix images $X = \rho_1(x)$, $Y = \rho_2(x)$, $Z = \rho_3(x)$ give the following system of equations

  \begin{equation}\label{eq:system}
\begin{array}{lll}
X_{1,1} &  = &  k_1+k_{21}+k_{31},
\\[1mm]
X_{1,2} & = & k_1+k_2+ k_{21}+k_{31} + k_3+2 k_{32}-1,
\\[1mm]
X_{2,2}& = &k_2+k_{21}+k_3 +k_{31}+2 k_{32},
\\[1mm]
Y_{1,1} & = & k_1+k_2+2 k_{21}+k_{31}+k_{32},
\\[1mm]
Y_{1,2} & =  &k_1+k_2+k_3+2 k_{21} + k_{31}+k_{32} - 1,
\\[1mm]
Y_{2,2}  & = & k_3+k_{31}+k_{32},
\\[1mm]
Z_{1,1} & = &   k_1+k_{21}+k_{31},
\\[1mm]
Z_{1,2} & =& k_{31}+ k_1+k_3+k_{21}+k_{32},
\\[1mm]
Z_{2,2}  & = & k_3+k_{31}+k_{32}, \end{array}
\end{equation}
cf. \eqref{eq:Rep.I.6},  \eqref{eq:Rep.II.6}, \eqref{eq:Rep.III.6} respectively.
This affine system has a unique solution, so that the powers  $k_1, k_2, k_{21}, k_{3}, k_{31}, k_{32}$ are uniquely determined. Hence $\rho$ is injective.
\end{proof}
\end{appendix}

\end{document}